\documentclass[a4paper,12pt, reqno]{amsart}

\usepackage{amssymb, mathtools}
\usepackage{manfnt}
\usepackage{quiver}
\usepackage{tikz-cd}
\usepackage{enumitem}
\usepackage[colorlinks=true, linkcolor=black!10!blue, linkbordercolor=black!15!blue, citecolor=magenta, citebordercolor=magenta, urlcolor=indigo, linktocpage=true]{hyperref}

\setlist[enumerate]{font=\normalfont}

\allowdisplaybreaks
\definecolor{indigo}{HTML}{492DA5}

\newtheorem{thm}{Theorem}
\newtheorem{cor}[thm]{Corollary}
\newtheorem{lemma}[thm]{Lemma}
\newtheorem*{thm*}{Theorem}
\newtheorem{prop}[thm]{Proposition}

\theoremstyle{definition}
\newtheorem{defn}[thm]{Definition}

\theoremstyle{remark}
\newtheorem{rmk}[thm]{Remark}

\newtheorem{examples}[thm]{Examples}

\newcommand{\etale}{{\'e}tale}

\numberwithin{equation}{section}
\numberwithin{thm}{section}

\DeclareMathOperator{\AC}{\bf{AC}}
\DeclareMathOperator{\AComega}{\bf{CC}}
\DeclareMathOperator{\DC}{\bf{DC}}
\DeclareMathOperator{\DMC}{\bf{DMC}}
\DeclareMathOperator{\CCfin}{\bf{CC_{fin}}}

\newcommand{\NN}{\mathbb{N}}
\newcommand{\ZZ}{\mathbb{Z}}
\newcommand{\RR}{\mathbb{R}}
\newcommand{\TT}{\mathbb{T}}

\newcommand{\QQ}{\mathbb{Q}}

\newcommand{\Cst}{\mathrm{C}^*}

\newcommand{\Iso}{\mathrm{Iso}}
\newcommand{\Int}{\mathrm{Int}}

\RequirePackage{color}

\title{Topologically Free non-Hausdorff Groupoids}

\author[Clark]{
    Lisa Orloff Clark$^{\hyperlink{a}{a}}$
}
\author[Thompson]{
   Ryan Thompson$^{\hyperlink{a}{a}, \hyperlink{ast}{\ast}}$
}
\author[Tolich]{
    Ilija Tolich$^{\hyperlink{a}{a}}$ \vspace{2ex} \\ 
    $^{ {\color{black!10!blue}\hypertarget{a}{a}}}$\footnotesize{\textit{S\lowercase{chool of} M\lowercase{athematics and }S\lowercase{tatistics, }V\lowercase{ictoria} U\lowercase{niversity of }W\lowercase{ellington, }PO B\lowercase{ox 600, }W\lowercase{ellington 6140, }NEW ZEALAND}}}

\email[L.O.~Clark]{\href{mailto:lisa.orloffclark@vuw.ac.nz}{lisa.orloffclark@vuw.ac.nz}}
\email[R.~Thompson]{\href{mailto:ryan.thompson@vuw.ac.nz}{ryan.thompson@vuw.ac.nz}}
\email[I.~Tolich]{\href{mailto:ilija.tolich@vuw.ac.nz}{ilija.tolich@vuw.ac.nz}}

\keywords{topologically free groupoid, effective groupoid, topologically principal groupoid, non-Hausdorff groupoid, Axiom of Choice, Baire Category Theorem \\
$^{\hypertarget{ast}{\ast}}$Corresponding author \\
\phantom{$^{\ast}$}Email: \href{mailto:ryan.thompson@vuw.ac.nz}{ryan.thompson@vuw.ac.nz}}
\subjclass{03E25, 22A22, 54E52}

\thanks{This research was supported by the Marsden Fund of the Royal Society of New Zealand (21\nobreakdash-VUW\nobreakdash-156 and 24\nobreakdash-VUW\nobreakdash-014).}

\begin{document}

\begin{abstract}
We study three conditions that control the behaviour of isotropy in étale groupoids, and their relationships under the additional assumptions of second-countability and Hausdorffness. We examine a number of examples that show these properties are distinct. Working under the assumption of the Zermelo-Fraenkel axioms, excluding choice, we then examine an alternate characterization of topological freeness, first introduced by Anantharaman-Delaroche, in the non-Hausdorff setting. Finally, we prove an equivalence between the Baire Category Theorem and an étale groupoid theorem, along with similar equivalences to other weakenings of the Axiom of Choice.
\end{abstract}

\maketitle

\section{Introduction}

\'Etale groupoids have long been an object of study by operator algebraists for their ability to model many important classes of $\Cst$-algebras.  Of particular interest is the behaviour of isotropy -- that is, the collection of morphisms whose source and range coincide. In the \'etale groupoid literature, there exist a number of conditions that control the existence and properties of isotropy in \'etale groupoids: effective, topologically principal and topologically free. The terminology in the literature regarding these properties is inconsistent, and thus one of our goals is to provide an unambiguous reference. For second-countable Hausdorff groupoids, these three conditions coincide (Theorem~\ref{thm:ac}), but in general they are distinct (see Examples~\ref{example}).

As the study of non-Hausdorff groupoids and non-second-countable groupoids in $\Cst$-algebra theory and ring theory has become more common, the nuances of these conditions have become more salient and identifying which condition is the ``correct'' choice is not always obvious and can require additional assumptions (cf. \cite{KM2023} and \cite{Clark2019}).
In this paper we put forward evidence that topologically free is the condition that should be prioritised in the setting of \'etale groupoids. We do so both by gathering results from the literature in an expository fashion, and also by presenting some new supporting results.

We begin in Section~\ref{sec:conditions} by defining the three main conditions,  establishing the relationships between them (see diagram~\ref{dg:diagram}), and briefly discussing related ideas that appear in the literature. We then provide examples to demonstrate that they are indeed distinct (Examples~\ref{example}). In section~\ref{sec:claire}, we provide an equivalent characterisation of the topologically free condition (see Proposition~\ref{prop:claire}). This condition was established in the Hausdorff and second-countable setting by Anantharaman-Delaroches in the 1990's \cite[Proposition~2.3]{Anantharaman-Delaroche1997}, and has proven useful in the more general Hausdorff setting, for example in \cite[Lemma~3.1]{BCFS}.

In the final section, we work only within the Zermelo-Fraenkel (ZF) axioms and consider the relationship between these three groupoid conditions and variants of the Baire Category Theorem -- see Corollaries~\ref{cor1}, \ref{cor2} and \ref{cor3}.  These results improve upon the work of the first author and J.H Brown in \cite{BrownClark}, where a more complicated relationship is established for a special case.  

\section{Preliminaries}

\subsection{Groupoids}

A \emph{groupoid} is a small category $G$ in which every morphism has an inverse. The \emph{unit space} of $G$ is the set of objects of $G$, which we identify with the collection of identity morphisms, and is denoted by $G^{(0)}$. There exist \emph{source} and \emph{range} maps $s, r \colon G \to G^{(0)}$ given by $s(\gamma) \coloneqq \gamma^{-1} \gamma$ and $r(\gamma) \coloneqq \gamma \gamma^{-1}$ for all $\gamma \in G$. The composable pairs of $G$ is the collection $\{(\alpha, \beta) \in G \times G : r(\beta) = s(\alpha)\}$, which we denote by $G^{(2)}$. If $(\alpha, \beta) \in G^{(2)}$ then we write their product as $\alpha \beta$. 

If $U, V \subseteq G$, then we write
\begin{equation}\label{eq:product} UV \coloneqq \{\alpha \beta : (\alpha, \beta) \in U \times V \cap G^{(2)}\}. 
\end{equation}
If $u \in G$, we write $uG$ for $\{u\}G$ and $Gu$ for $G \{u\}$. Note that $uGv = uG \cap Gv$. The \emph{isotropy group at $x$} is $xGx$, and the \emph{isotropy subgroupoid of $G$} is
\[ \Iso(G) \coloneqq \bigcup_{x \in G^{(0)}} xGx. \]
We say that $x \in G^{(0)}$ has \emph{trivial isotropy} if $xGx = \{x\}$ -- otherwise, it is said to have \emph{nontrivial isotropy}. If $V \subseteq G^{(0)}$, then the \emph{restriction of $G$ to $V$}, denoted $G|_V$, is the groupoid $s^{-1}(V) \cap r^{-1}(V)$. A subset $U \subseteq G$ is \emph{invariant} if $s(\gamma) \in U$ implies $r(\gamma) \in U$ for all $\gamma \in G$. 

A \emph{topological groupoid} is a groupoid $G$ endowed with a topology under which the multiplication and inversion maps are continuous. It follows that the range and source maps are also continuous. An open subset $B \subseteq G$ is an \emph{open bisection} if $r|_B$ (equivalently, $s|_B$) are homeomorphisms onto open subsets of $G^{(0)}$ -- in particular, they are injective. A topological groupoid $G$ is \emph{étale} if the range map (equivalently, the source map) is a local homeomorphism. 

It is well-known that if $G$ is étale, then $G^{(0)}$ is open in $G$ (see \cite[Proposition~3.2]{Exel2008}), and each of $xG$ and $Gx$ are discrete with respect to the subspace topology for all $x \in G^{(0)}$ (see \cite[Corollary~2.4.10]{sims}). Moreover, if $G$ is an étale groupoid, then $G$ admits a basis of open bisections  (see \cite[Proposition~3.5]{Exel2008}).  It is useful to note that the collection of all open bisections in and \'etale groupoid is an inverse semigroup such that, for open bisections $U$ and $V$, their product, as defined as in \eqref{eq:product}, is itself an open bisection \cite[Proposition~2.2.3]{patersonGroupoidsInverseSemigroups1999}.

\subsection{Axioms of Choice}

We recall the following forms of choice, and refer the reader to \cite{herrlich2000} for further details. 

\begin{enumerate}[label=(\roman*)]
    \item The \emph{Axiom of Choice} ($\AC$) states that every family of sets admits a choice function. 
    \item The \emph{Axiom of Countable Choice} ($\AComega$) states that every countable family of sets admits a choice function. 
    \item The \emph{Axiom of Countable Finite Choice} ($\CCfin$) states that every countable family of finite sets admits a choice function.
    \item The \emph{Axiom of Dependent Choice} ($\DC$) states that for every nonempty set $X$ and every total relation $R$ on $X$, there exists a sequence $\{x_i\}_{i \in \NN}$ such that $x_i R x_{i+1}$ for all $i \in \NN$. 
    \item The \emph{Axiom of Dependent Multiple Choice} ($\DMC$) states that for every nonempty set $X$ and every total relation $R$ on $X$, there exists a sequence of of nonempty finite sets $\{X_i\}_{i \in \NN}$ such that for all $i \in \NN$ and all $x \in X_i$, there exists $y \in X_{i+1}$ such that $xRy$.
\end{enumerate}

The diagram below outlines the relationship between the choice axioms (see, for instance, \cite{Fossy}). In addition we note that $\CCfin$ and $\DMC$ together imply $\DC$ (\cite{herrlichAxiomChoice2006}).
\[\begin{tikzcd}
	&&&& {\bf{DMC}} \\
	{\bf{AC}} && {\bf{DC}} \\
	&&&& {\bf{CC}}
	\arrow["{\bf{CC_{fin}}}"', shift right=2, Rightarrow, from=1-5, to=2-3]
	\arrow[Rightarrow, from=2-1, to=2-3]
	\arrow[shift right=2, Rightarrow, from=2-3, to=1-5]
	\arrow[Rightarrow, from=2-3, to=3-5]
\end{tikzcd}\]

\section{Groupoid isotropy conditions} \label{sec:conditions}

We now introduce our main isotropy conditions, and briefly discuss related notions that have appeared in the literature.  

\begin{defn}
An \etale\ groupoid $G$ is \emph{effective} if $\Int(\Iso(G))=G^{(0)}$.
\end{defn}

The definitions of effective groupoids found in the literature is fairly consistent. In some cases, the expression ``essentially principal'' has been used to mean effective (see, for example, \cite{exelNonHausdorff} and \cite{matuiHomologyTopologicalFull2012}), but in general essentially principal holds a different meaning. An \etale\ groupoid $G$ is said to be a \emph{strongly effective groupoid} if, whenever $V \subseteq G^{(0)}$ is a closed invariant subset (in the subspace topology), then $G|_V$ is effective. It is clear that if $G$ is strongly effective then it is effective, since $G^{(0)}$ is closed and invariant in itself (cf. \cite{clarkIdealsSteinbergAlgebras2016}). 

In \cite{BrownClark} the first author used the term ``effective'' to refer to the property we now call topologically free. 

\begin{defn}
An \etale\ groupoid $G$ is \emph{topologically principal} if the set of units with trivial isotropy is dense in $G^{(0)}$.
\end{defn}

In \cite[Definition~3.4.6]{EP}, Exel and Pitts use both the expressions ``topologically free'' and ``topologically principal'' interchangeably to refer to topologically principal groupoids. Moreover, they introduce what we refer to as \emph{strongly topologically principal}, which requires that whenever $V \subseteq G^{(0)}$ is closed and invariant (in the subspace topology), then $G|_V$ is topologically principal. Again, it is clear strongly topologically principal implies topologically principal.  

In a purely algebraic sense, a groupoid $G$ is \emph{principal} if $\Iso(G) = G^{(0)}$. It is clear that all principal groupoids are both strongly topologically principal and topologically principal.

\begin{defn}
    An \etale\ groupoid $G$ is \emph{topologically free} if $\Int(\Iso(G)\setminus G^{(0)})=\emptyset$.
\end{defn}

The concept of topological freeness was initially studied by Archbold and Spielberg in the context of actions and dynamical systems (see, for instance, \cite{archboldTopologicallyFreeActions1994}). Kwa\'sniewski and Meyer explicitly defined topologically free as a property of étale groupoids in ~\cite{KM2021}. Later in \cite{KM2023}, they defined an étale groupoid $G$ to be \emph{residually topologically free} if, whenever $V \subseteq G$ is closed and invariant, then the groupoid $G|_V$ is topologically free. To be consistent with the corresponding ``strong'' properties of being effective and topologically principal, we refer to this property as \emph{strongly topologically free}.

Moreover, Kwa\'sniewski and Meyer introduce the notion of an \etale\ groupoid being \emph{AS topologically free}, (AS for Archbold-Spielberg) which requires that whenever $U_1, \ldots, U_n \subseteq G \setminus G^{(0)}$ are finitely many bisections, then the union
\[ \bigcup_{i=1}^{n} \{x \in G^{(0)} : xGx \cap U_i \neq \emptyset\} \]
has empty interior.

We then have the following implications:
\[  \text{Strongly topologically free} \Rightarrow \text{topologically free} \Leftarrow \text{AS topologically free}. \]

The reader is referred to \cite[Remark~2.1]{armstrongReconstructionTwistedSteinberg2023} for additional discussion of these conditions and their relationships. Furthermore, in \cite{steinbergSimplicityInverseSemigroup2021}, the authors discuss these conditions in the context of simplicity of Steinberg algebras, and provide a number of examples of étale groupoids associated to self-similar actions that satisfy some, but not all, of these conditions.

The following relationships between these conditions are established using only ZF axioms.
\begin{thm}
\label{thm:3zf}
Let $G$ be a locally compact \'etale groupoid with Hausdorff unit space.
    \begin{enumerate}
    \item\label{it1:zf} If $G$ is effective, then $G$ is topologically free.
    \item\label{it2:zf} If $G$ is topologically principal, then $G$ is topologically free.
    \item\label{it3:zf} If $G^{(0)}$ is closed in $G$, and $G$ is topologically principal, then $G$ is effective (and topologically free).
\end{enumerate}
\end{thm}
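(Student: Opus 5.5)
We prove the three items separately, working directly from the definitions and using only that $G$ has a basis of open bisections, that $G^{(0)}$ is open in $G$, and that $s$ and $r$ restricted to an open bisection are homeomorphisms onto open subsets of $G^{(0)}$. No choice principle will be needed, since every step either produces a specific open set or derives a contradiction from a given point.

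For \eqref{it1:zf}, suppose $G$ is effective and let $U$ be an open set with $U \subseteq \Iso(G)\setminus G^{(0)}$. Then $U \subseteq \Iso(G)$, so $U \subseteq \Int(\Iso(G)) = G^{(0)}$. Since also $U \cap G^{(0)} = \emptyset$, we get $U=\emptyset$. This step is immediate.

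For \eqref{it2:zf}, suppose the units with trivial isotropy are dense. Let $U \subseteq \Iso(G)\setminus G^{(0)}$ be open and nonempty, and pick $\gamma \in U$. Shrink to an open bisection $B$ with $\gamma \in B \subseteq U$. Then $s(B)$ is a nonempty open subset of $G^{(0)}$, so it contains a unit $x$ with trivial isotropy. Let $\beta \in B$ be the element with $s(\beta)=x$. Because $\beta \in \Iso(G)$, we have $\beta \in xGx = \{x\}$, so $\beta = x \in G^{(0)}$. This contradicts $B \cap G^{(0)} = \emptyset$.

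For \eqref{it3:zf}, suppose $G^{(0)}$ is closed and $G$ is topologically principal. Let $\gamma \in \Int(\Iso(G))$; we must show $\gamma \in G^{(0)}$. Suppose instead that $\gamma \notin G^{(0)}$. Since $G^{(0)}$ is closed, $G\setminus G^{(0)}$ is open, so there is an open bisection $B$ with
\[ \gamma \in B \subseteq \Int(\Iso(G)) \setminus G^{(0)}. \]
The argument of \eqref{it2:zf} now gives a contradiction, so $\Int(\Iso(G)) \subseteq G^{(0)}$. For the reverse inclusion, $G^{(0)}$ is open in $G$ (étale) and contained in $\Iso(G)$, so $G^{(0)} \subseteq \Int(\Iso(G))$. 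Hence $G$ is effective, and topologically free by \eqref{it2:zf}.

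The only real subtlety is in \eqref{it3:zf}. There we must use closedness of $G^{(0)}$ to separate $\gamma$ from the units by an open set, because in non-Hausdorff examples a non-unit isotropy element can lie in the closure of $G^{(0)}$. The hypotheses of local compactness and a Hausdorff unit space appear not to be needed for these arguments.
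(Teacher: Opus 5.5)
Your proof is correct and follows essentially the same route as the paper. Item (1) comes straight from the definitions. Item (2) turns a nonempty open set of non-unit isotropy into an open set of units with nontrivial isotropy; you do this via $s$ on an open bisection, while the paper applies $s$ to the open set directly. Item (3) uses closedness of $G^{(0)}$ to make $\Int(\Iso(G))\setminus G^{(0)}$ open and then repeats that argument. Your remark that local compactness and Hausdorffness of $G^{(0)}$ are never used is also accurate for the paper's proof.
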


\begin{rmk}
  When $G^{(0)}$ is Hausdorff, the condition that $G^{(0)}$ is closed in $G$ in Theorem~\ref{thm:3zf}~\eqref{it3:zf} is equivalent to $G$ being Hausdorff, see for example \cite[Proposition~3.10]{ExelPardo}. However, we are not assuming  $G^{(0)}$ is Hausdorff. So the condition, as stated, is weaker than $G$ being Hausdorff.  
\end{rmk}

\begin{proof}
    Item~\eqref{it1:zf} follows immediately from the definitions. Item~\eqref{it2:zf} is straightforward contrapositive argument: Suppose $G$ is not topologically free. Then, there exists some open $U \subseteq \Iso(G) \setminus G^{(0)}$, and so $s(U)$ is an open subset of $G^{(0)}$ consisting entirely of units with nontrivial isotropy. Thus, $G$ is not topologically principal.
    
    Finally, item~\eqref{it3:zf} is similar to \cite[Proposition~3.6(i)]{Renault2008}.  Again, we provide the details via a contrapositive argument.  Suppose $G$ is not effective.  Then there exists $U \subseteq \Iso(G)$ such that $U$ is not contained in the unit space.  Thus $U \setminus G^{(0)}$ is a nonempty open subset of $G$ that consists entirely of nontrivial isotropy.  Since $r$ is an open map, $r(U \setminus G^{(0)})$ is an open subset of $G^{(0)}$ consisting entirely of units with nontrivial isotropy.  Therefore, $G$ is not topologically principal.  
\end{proof}

By adding $\AComega$, we have the following theorem. 

\begin{thm}
\label{thm:ac}
Let $G$ be a locally compact \'etale groupoid with Hausdorff unit space.
\begin{enumerate}
    \item\label{it1:zfc} If $G$ is second-countable and effective, then it is topologically principal (and topologically free).
    \item\label{it2:zfc} If $G$ is second-countable and Hausdorff, then all three conditions are equivalent.
    \end{enumerate}
\end{thm}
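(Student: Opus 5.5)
The plan is a Baire category argument, using countable choice only to make the Baire Category Theorem available. For item~\eqref{it1:zfc}, suppose $G$ is second-countable and effective. Fix a countable base of open bisections $\{B_n\}_{n \in \NN}$ for $G$; we may take one because $G$ has a base of open bisections and is second-countable. For each $n$ put
\[ F_n \coloneqq \{ x \in s(B_n) : \text{the unique } \gamma \in B_n \text{ with } s(\gamma)=x \text{ satisfies } \gamma \in \Iso(G)\setminus G^{(0)} \}. \]
If $x$ has nontrivial isotropy, pick $\gamma \in xGx \setminus \{x\}$ and a basic bisection $B_n \ni \gamma$. Since $G^{(0)}$ is open, we may shrink to $B_n \subseteq G \setminus G^{(0)}$, which is legitimate because the basic sets contained in $G\setminus G^{(0)}$ form a base for that open set. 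Then $x \in F_n$. So the set of units with nontrivial isotropy equals $\bigcup_n F_n'$, where the union runs over the $n$ with $B_n \cap G^{(0)} = \emptyset$ and $F_n' \coloneqq \{x \in s(B_n) : r((s|_{B_n})^{-1}(x)) = x\}$.

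Next I show each $F_n'$ is nowhere dense in $G^{(0)}$. Because $G^{(0)}$ is Hausdorff, the set $\{x \in s(B_n) : r\circ (s|_{B_n})^{-1}(x) = x\}$ is closed in $s(B_n)$, being the equaliser of two continuous maps into a Hausdorff space. Suppose its closure in $G^{(0)}$ contained a nonempty open set $W$. Then $W \cap s(B_n)$ is nonempty: if it were empty, the open set $W$ would miss $s(B_n) \supseteq F_n'$, contradicting that $W$ meets the closure of $F_n'$. Moreover $W\cap s(B_n)$ is contained in the relative closure of $F_n'$ in $s(B_n)$, which is $F_n'$ itself. Hence $(s|_{B_n})^{-1}(W \cap s(B_n))$ is a nonempty open subset of $\Iso(G)$ disjoint from $G^{(0)}$. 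This contradicts effectiveness, since effectiveness gives $\Int(\Iso(G)) = G^{(0)}$.

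Now $G^{(0)}$ is an open subspace of the locally compact space $G$, and I will arrange that it is a locally compact Hausdorff space; this is the point needing care, since local compactness of $G$ must pass to the open set $G^{(0)}$. Given that, the Baire Category Theorem for second-countable locally compact Hausdorff spaces applies. That version of the theorem is provable from $\AComega$, and in fact only requires choosing from countably many nonempty open sets. It follows that the complement of $\bigcup_n \overline{F_n'}$ is dense, so the units with trivial isotropy are dense, and $G$ is topologically principal. Topological freeness then follows from Theorem~\ref{thm:3zf}~\eqref{it2:zf}.

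For item~\eqref{it2:zfc}, all implications are assembled from earlier results. Effective implies topologically principal is item~\eqref{it1:zfc}. Topologically principal implies effective holds because $G$ Hausdorff makes $G^{(0)}$ closed, so Theorem~\ref{thm:3zf}~\eqref{it3:zf} applies. Either condition implies topologically free by Theorem~\ref{thm:3zf}.

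It remains to show topologically free implies effective when $G$ is Hausdorff. Let $U \subseteq \Iso(G)$ be open. Then $U \setminus G^{(0)}$ is open because $G^{(0)}$ is closed, and it lies in $\Iso(G)\setminus G^{(0)}$, so it is empty. Hence $\Int(\Iso(G)) \subseteq G^{(0)}$, and the reverse inclusion holds because $G^{(0)}$ is open.

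I expect the main obstacle to be the ZF bookkeeping around Baire. The argument must use only countable choice, both for the Baire Category Theorem and for the choice of the countable base, where a fixed enumeration avoids any choice. It must also establish carefully that $G^{(0)}$ is a Baire space: it is locally compact Hausdorff as an open subspace of $G$, and the compact neighbourhoods needed in the Baire argument have to be obtained without choosing from uncountably many sets.
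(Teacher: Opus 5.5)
Your strategy matches the paper's for item~(1): the paper just cites Renault's Proposition~3.6(ii), which is this same Baire category argument. Your item~(2) is correct, and your explicit proof that topologically free implies effective once $G^{(0)}$ is closed fills in a step the paper leaves implicit.

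However, item~(1) contains a step that fails. You write that, since $G^{(0)}$ is open, you may shrink $B_n$ into $G\setminus G^{(0)}$, calling the latter ``that open set''. But $G^{(0)}$ open makes $G\setminus G^{(0)}$ \emph{closed}. It is open only when $G^{(0)}$ is closed, which for a Hausdorff unit space amounts to $G$ being Hausdorff, and item~(1) assumes only that $G^{(0)}$ is Hausdorff. The shrinking is valid in Renault's Hausdorff setting, not here.

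Consequently your identity ``units with nontrivial isotropy $=\bigcup_n F_n'$'' is false in general. The two-headed snake of Examples~\ref{example} is second-countable, locally compact and \'etale with Hausdorff unit space. The unit $1$ has nontrivial isotropy since $p\in 1G1$. Yet every open set containing $p$ contains an interval $(1-\varepsilon,1)$ of units, so no open set through $p$ avoids $G^{(0)}$, and $\bigcup_n F_n'=\emptyset$. The snake is not effective, but your covering step never uses effectiveness, so the step is simply invalid.

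The repair is your own first definition. Keep $F_n=\{x\in s(B_n): (s|_{B_n})^{-1}(x)\in\Iso(G)\setminus G^{(0)}\}$ for \emph{every} $n$, with no shrinking; these sets plainly cover the units with nontrivial isotropy. They are still relatively closed in $s(B_n)$. For $x\in s(B_n)$ one has $(s|_{B_n})^{-1}(x)\in G^{(0)}$ iff $x\in B_n$, so $F_n=E_n\setminus(B_n\cap G^{(0)})$. Here $E_n$ is your equaliser set, closed in $s(B_n)$ because $G^{(0)}$ is Hausdorff, and $B_n\cap G^{(0)}$ is open.

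Your nowhere-density argument then runs verbatim: the open set $(s|_{B_n})^{-1}(W\cap s(B_n))$ lies in $\Iso(G)\setminus G^{(0)}$ by the definition of $F_n$. This is exactly the mechanism of Lemma~\ref{range of isosection} (closed $\Iso(G)$ minus open $G^{(0)}$). The contradiction uses only $\Int(\Iso(G)\setminus G^{(0)})=\emptyset$, so under the standing hypotheses the repaired argument even shows that second-countable topologically free groupoids are topologically principal.

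The point you flagged as the main obstacle is routine. If $K$ is a compact neighbourhood of $x\in G^{(0)}$ in $G$, then $r(K)$ is a compact neighbourhood of $x$ in $G^{(0)}$, because $r$ is continuous and open; hence $G^{(0)}$ is locally compact Hausdorff. With a fixed enumeration of a countable base, the nested-sets proof of Baire can always take the least admissible basic set, so even $\AComega$ is not needed there.
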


   \begin{proof}
           Item~\eqref{it1:zfc} follows from \cite[Proposition~3.6(ii)]{Renault2008}.   Note that $\AComega$ is used in the last section of the proof. Item~\eqref{it2:zfc} follows from item~\eqref{it1:zf} and Theorem~\ref{thm:3zf}.
   \end{proof} 
In diagram form, we have established the following:
\begin{equation}
\begin{tikzcd}[sep=large] \label{dg:diagram}
	{\text{Effective}} \\
	\\
	{\text{Topologically Principal}} && {\text{Topologically Free}}
	\arrow["\substack{\text{2nd Countable} \\ + \text{ } \mathrm{CC}}"', shift right=2, color={rgb,255:red,92;green,214;blue,92}, Rightarrow, from=1-1, to=3-1]
	\arrow[Rightarrow, from=1-1, to=3-3]
	\arrow["{\text{Hausdorff}}"', shift right=2, color={rgb,255:red,214;green,92;blue,92}, Rightarrow, from=3-1, to=1-1]
	\arrow[shift left=2, Rightarrow, from=3-1, to=3-3]
	\arrow["{\text{Hausdorff}}"', shift right=4, color={rgb,255:red,214;green,92;blue,92}, Rightarrow, from=3-3, to=1-1]
	\arrow["{\text{2nd Countable}}", shift left=2, color={rgb,255:red,92;green,214;blue,92}, Rightarrow, from=3-3, to=3-1]
\end{tikzcd}
\end{equation}

\begin{examples}\label{example}
We gather a number of examples to demonstrate that these three conditions are indeed distinct. The first is well known, see for example, \cite[Example~2.1]{CZ}. 
    \begin{enumerate}
        \item 
        Consider $[0,1]$ with the standard topology $\tau$, and let $p$ be an adjoined point. The \emph{two-headed snake} groupoid is given by the collection 
        \[G \coloneqq [0,1]\cup\{p\}\] 
        equipped with the topology
         \[\tau_G \coloneqq \tau \cup \{U\cup\{p\}-\{1\}\colon U\in \tau \text{ and } 1\in U\}. \]
         The composable pairs are given by
        \[G^{(2)} \coloneqq \{(x,x)\colon x\in[0,1]\}\cup\{(1,p),(p,p),(p,1)\},\]
        and we set $pp = x$. 
        
        Then $G$ is an étale groupoid that is not Hausdorff.  
        It is clear that $G$ is topologically principal since $[0,1)$ consists entirely of trivial isotropy and is dense in $[0,1]$. However, $G$ is not effective since $\Iso(G)=G$.

        \item 
        The following example is from ~\cite[Example 6.3, 6.4]{BCFS}. Let $K$ denote the Cantor set, and define the topological product space $X \coloneqq (K \cap (0, 1)) \times \TT$. We define a continuous action of the real numbers $\RR$, equipped with the discrete topology, on $X$ by
        \[ t \cdot (s, e^{i \theta}) \coloneqq (s, e^{i(\theta + 2st \pi)}). \]
        We denote by $G$ the transformation groupoid of this action with elements $\RR\times (K \cap (0, 1)) \times \TT$ (see \cite{patersonGroupoidsInverseSemigroups1999} for details of this construction). We identify the unit space $G^{(0)}$ with $(K \cap (0, 1)) \times \TT$. Then $G$ is an étale Hausdorff groupoid with totally disconnected unit space, but is not second-countable. 

        For each $x \in G^{(0)}$, the isotropy group $xGx$ consists of elements $(\frac{n}{s}, (x, e^{i \theta}))$ for $n \in \ZZ$, and so every element of the unit space has nontrivial isotropy -- hence, $G$ is not topologically principal. To see that it is effective, let $U \subseteq G$ be open -- we claim that $U \setminus \Iso(G) \neq \emptyset$. Since $U$ is open, there exist $a < b \in (0, 1)$, $\theta \in (0, 2\pi)$ and $t \in \RR \setminus \{0\}$ such that
        \[ \{t\} \times (( (a, b) \cap K ) \times \{e^{i \theta}\})  \subseteq U. \]
        Let $s \in (a, b)$. If $st \notin \ZZ$ then $(t, (s, e^{i \theta})) \in U \setminus \Iso(G)$ and we are done. Suppose $st \in \ZZ$. Then, there exists $\varepsilon \in (0, \frac{1}{t})$ such that $s + \varepsilon \in (a, b)$. Then,
        \[ st < (s+\varepsilon)t < st+1, \]
        and so $(s + \varepsilon)t \notin \ZZ$, and $(t, (s+\varepsilon, e^{i \theta})) \in U \setminus \Iso(G)$. Thus, $G$ is effective. 
        
        \item \label{ex3} 
        The following is a modification of the groupoid construction we use later on in our proof of Theorem~\ref{Baire Category theorem}. Let $X \coloneqq \QQ$, and for each $x \in X$ adjoin a point $\gamma_x$ such that $\gamma_x \gamma_x = x$. Set $G \coloneqq X \cup \{\gamma_x : x \in X\}$. Let $\mathcal{B}$ be a basis for the subspace topology on $X$ inherited from $\RR$, and for each $x \in X$ and $B \in \mathcal{B}$ with $x \in B$ let $B_x \coloneqq (B - \{x\}) \cup \{\gamma_x\}$. Then, the collection
        \[ \mathcal{B} \cup \{B_x : x \in X\} \]
        is basis of open bisections making making $G$ an étale groupoid. Notice that $X$ consists entirely of non-trivial isotropy, and so $G$ is not topologically principal. Similarly, all of $G$ is isotropy, so $G$ is not effective. However, every open subset of $G$ intersects $X$, and so $G$ is topologically free. 
    \end{enumerate}
\end{examples}

\section{Non-Hausdorff Topological Freeness}
\label{sec:claire}
 In this section, we present an alternate characterization of topological freeness. The original statement was presented by C. Anantharaman-Delaroche (see \cite[Proposition~2.3]{Anantharaman-Delaroche1997}), and was associated to a topologically principal groupoid under the additional assumptions that $G$ is both second-countable and Hausdorff.  An equivalence was established for Hausdorff effective groupoids in \cite[Lemma~3.1]{BCFS}. We are not aware of anyone considering the non-Hausdorff case.
When $G$ is not assumed to be Hausdorff, the unit space may not be closed in $G$. This forces us to approach the proof differently to \cite[Proposition~2.3]{Anantharaman-Delaroche1997}

\begin{prop}
\label{prop:claire}
    Let $G$ be a locally compact \etale\ groupoid, and consider the following.
    \begin{enumerate}
     \item\label{it1:claire} For every open $U\subseteq G^{(0)}$ and compact $K\subseteq G \setminus G^{(0)}$ there exists a non-empty open set $V\subseteq U$ such that $VKV=\emptyset$.
        \item\label{it2:claire} The groupoid $G$ is topologically free.
    \end{enumerate}
    Then $\eqref{it1:claire} \implies \eqref{it2:claire}$. If $G^{(0)}$ is Hausdorff, we have $\eqref{it1:claire} \iff \eqref{it2:claire}$.
\end{prop}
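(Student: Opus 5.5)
We prove the two implications separately. For \eqref{it1:claire}$\implies$\eqref{it2:claire} we argue by contrapositive. Suppose $G$ is not topologically free, so there is a nonempty open $W\subseteq \Iso(G)\setminus G^{(0)}$. Since $G$ has a basis of open bisections and is locally compact, we shrink $W$ to a nonempty open bisection $B\subseteq W$. We then pick $\gamma\in B$ and a compact neighbourhood $K$ of $\gamma$ with $K\subseteq B$. Here we use local compactness of $G$ in the sense that every point has a compact neighbourhood inside any given open set; if only some compact neighbourhood is available, we intersect with bisections carefully.

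Put $U:=s(\Int K)$, which is open since $s$ is an open map on an \'etale groupoid. Every $\eta\in \Int K$ is isotropy, so $s(\eta)=r(\eta)$. Now let $V\subseteq U$ be any nonempty open set and take $x\in V$. There is $\eta\in\Int K$ with $s(\eta)=x=r(\eta)$, so $\eta=x\eta x\in VKV$. Hence $VKV\neq\emptyset$, and \eqref{it1:claire} fails. This direction needs no Hausdorffness.

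For \eqref{it2:claire}$\implies$\eqref{it1:claire} we assume $G^{(0)}$ is Hausdorff and $G$ is topologically free. Fix an open $U\subseteq G^{(0)}$ (which we may take nonempty) and a compact $K\subseteq G\setminus G^{(0)}$. The plan has three steps.
\begin{enumerate}
\item Cover $K$ by finitely many open bisections $B_1,\dots,B_n$, and use compactness of $K$ to arrange that each $B_i$ also lies in $G\setminus G^{(0)}$. This is the delicate point, because $G\setminus G^{(0)}$ need not be open in the non-Hausdorff setting. Instead, we will cover $K$ by compact pieces $K_i\subseteq B_i$ with $K=\bigcup K_i$; these exist because each point of $K$ has a compact neighbourhood inside a bisection.
\item Handle one bisection at a time. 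Given a nonempty open $V_{i-1}\subseteq U$, we seek a nonempty open $V_i\subseteq V_{i-1}$ with $V_iK_iV_i=\emptyset$. Then $V:=V_n$ satisfies $VKV=\emptyset$.
\item Reduce each step to a statement about the continuous map $f=r\circ(s|_{B_i})^{-1}$, defined on $s(B_i)$. If $V_{i-1}\cap s(K_i)$ has empty interior in $V_{i-1}$, we simply take $V_i=V_{i-1}\setminus s(K_i)$. This set is open because $s(K_i)$ is compact in the Hausdorff space $G^{(0)}$, hence closed, and it is nonempty by the empty-interior assumption. Otherwise, on the open set $O=\Int(V_{i-1}\cap s(K_i))$, consider $F=\{x\in O: f(x)=x\}$. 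The set $F$ is closed in $O$ since $G^{(0)}$ is Hausdorff.
\end{enumerate}

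Step 3 then splits on whether $F=O$. If $F=O$, the set $(s|_{B_i})^{-1}(O)$ is a nonempty open subset of $B_i$ consisting of isotropy. Its points lie in $K_i\subseteq G\setminus G^{(0)}$, because $O\subseteq s(K_i)$ and $s|_{B_i}$ is injective. This contradicts topological freeness. So some $x\in O$ has $f(x)\neq x$. By Hausdorffness of $G^{(0)}$ and continuity of $f$, there is an open $x\in V_i\subseteq O$ with $f(V_i)\cap V_i=\emptyset$. Then $V_iK_iV_i\subseteq V_iB_iV_i=\emptyset$, because an element $\eta\in B_i$ with $s(\eta),r(\eta)\in V_i$ would satisfy $r(\eta)=f(s(\eta))$.

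The main obstacle is step 1: ensuring that the compact pieces $K_i$ sit inside bisections and that $s(K_i)$ is closed. Both require local compactness in the strong sense and Hausdorffness of $G^{(0)}$, rather than of $G$. The key point is that $K_i\subseteq G\setminus G^{(0)}$ is exactly what is needed to get the contradiction with topological freeness, so we never need $G^{(0)}$ to be closed in $G$.
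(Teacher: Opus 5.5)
Your proof is correct in substance. The forward direction is the paper's argument, but in the reverse direction you organise things differently. The paper inducts on the number of covering bisections. For a single bisection $B\supseteq K$ it takes $V=U\cap B$ when this is nonempty; then $VKV=\emptyset$ because $s|_B$ is injective and $K\cap G^{(0)}=\emptyset$. Otherwise $UBU$ is a nonempty open subset of $G\setminus G^{(0)}$, so it contains some $\gamma\notin\Iso(G)$, and a net argument using Hausdorffness of $G^{(0)}$ produces $V\ni r(\gamma)$ with $VBV=\emptyset$. The inductive step then writes $K=(K\setminus\bigcup_{i\le m}B_i)\cup(K\setminus B_{m+1})$.

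Your successive shrinking $U=V_0\supseteq V_1\supseteq\cdots\supseteq V_n$ over pieces $K_i\subseteq B_i$ is the sound way to pass from one bisection to finitely many. The paper's two-set split does not actually cover the points of $K$ in $B_{m+1}\cap\bigcup_{i\le m}B_i$. Your direct separation of $x$ from $f(x)$ is also cleaner than the paper's nets and subnets. Conversely, the paper's one-bisection step never uses compactness of $K$: it works for any $K\subseteq B\setminus G^{(0)}$. Combined with your shrinking, it could therefore be applied to the pieces $K\cap B_i$ directly, with no appeal to local compactness in that direction at all. Your Case~A needs $s(K_i)$ closed, hence compact pieces, and that is the only reason your step~1 is delicate.

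Two details should be tightened.

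\textbf{Step~1.} Justify the compact neighbourhoods. If $N$ is a compact neighbourhood of a point $\gamma$, then $s(N)$ is a compact neighbourhood of $s(\gamma)$ in the Hausdorff space $G^{(0)}$, since $s$ is open. So $G^{(0)}$ is locally compact Hausdorff and has bases of compact neighbourhoods, which $(s|_B)^{-1}$ carries into any open bisection $B$. Then set $K_i:=N_i\cap(G\setminus G^{(0)})$, which is compact because $G^{(0)}$ is open, and claim only $K\subseteq\bigcup_i K_i$. The sets $K\cap N_i$ need not be compact when $G$ is not Hausdorff, and step~2 only uses the inclusion.

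\textbf{Forward direction.} Here $G^{(0)}$ need not be Hausdorff, so do not insist on $K\subseteq B$; intersecting a compact set with a bisection does not keep it compact. Instead take any compact $N$ with $\gamma\in\Int N$, and use $K:=N\cap(G\setminus G^{(0)})$ and $U:=s(B\cap\Int N)$. This is exactly the paper's argument.
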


\begin{proof}
   For \eqref{it1:claire} implies \eqref{it2:claire},  we prove the contrapositive.  Suppose that $G$ is not topologically free. Let $W  \subseteq \Iso(G)\setminus G^{(0)}$ be a nonempty open set. Since $G$ is \'etale, we may assume that $W$ is a bisection.  Fix $\gamma\in W$. Using that $G$ is locally compact,  we can find an open $B\subseteq W$ containing $\gamma$ such that $B$ is contained in a compact set $K$. Without loss of generality, we can assume that $K$ is contained in $G \setminus G^{(0)}$ -- if not, we replace it with $K \cap (G \setminus G^{(0)})$. Then, since $G$ is \'etale, $G \setminus G^{(0)}$ is closed and so $K \cap (G \setminus G^{(0)})$ is closed in $K$ and thus compact.

   Now consider $r(B)\subseteq G^{(0)}$ and fix $u\in r(B)$. Then $r|_B^{-1}(u)\in B\subseteq K$. Since $B\subseteq \Iso(G)$ we also have $s(r|_B^{-1}(u))=u$. Thus $r|_B^{-1}(u)\in uKu$. Hence, for every nonempty open set $V\subseteq r(B)$ we will have $VKV\neq \emptyset$.

     For the reverse implication, assume that $G^{(0)}$ is Hausdorff, and suppose that $G$ is topologically free.
     Fix a compact $K\subseteq G\setminus G^{(0)}$ and an open $U\subseteq G^{(0)}$. Since $K$ is compact, there exist finitely many open bisections $\{B_i\}_{i=0}^n$ such that $K \subseteq \bigcup_{i=0}^n B_i$. We can further assume that $K\cap B_i\neq \emptyset$ for each $i \in \{0, \ldots, n\}$.    We proceed by induction on the the number of bisections required to cover $K$.

    For the base case, let $B$ be an open bisection such that $K \subseteq B$.
    If $UKU=\emptyset$, then setting $V \coloneqq U$, we are done.  Suppose $UKU\neq \emptyset$.  We may also assume that $U\cap B=\emptyset$, otherwise $V\coloneqq U\cap B$ would satisfy $VKV=\emptyset$. Thus, under these assumptions, $UBU \subseteq G \setminus G^{(0)}$ is a non-empty open set.

    Since $G$ is topologically free, we have that $\Int(\Iso(G)\setminus G^{(0)})=\emptyset$, and so the open set $UBU$ is not entirely isotropy. Thus, there exists $\gamma \in UBU \setminus \Iso(G)$ such that $u:=r(\gamma)\in U$. 
    
    We claim that there exists a neighbourhood $V$ of $u$ such that $VBV=\emptyset$. Let $\{V_j\}_{j \in J}$ be a neighbourhood basis for $u$ contained in $U$. Toward a contradiction, suppose that for every $j \in J$, there exists some $\alpha_j \in V_j B V_j$ -- that is, $r(\alpha_j), s(\alpha_j) \in V_j$. It follows that $r(\alpha_j) \to u$, and so by \cite[Proposition~1.15]{DanaCrossed}, there exists a subnet $\{\alpha_{j_l}\}$ contained in $B$ such that $\alpha_{j_l} \to s|_{B}^{-1}(u) = \gamma$. Since $r$ is continuous, and $G^{(0)}$ is Hausdorff, we have $r(s^{-1}|_B(u)) = s(s^{-1}|_B(u)) = u$. Since $B$ is a bisection, it must be that $\gamma = s^{-1}|_B(u)$, and so $\gamma \in \Iso(G)$, a contradiction. Thus, there is some neighbourhood $V$ of $u$ satisfying $VBV = \emptyset$.

   For the inductive hypothesis,  suppose that there is $m > 1$ such that for every open set $U\subseteq G^{(0)}$ and every compact set $K\subseteq G\setminus G^{(0)}$ such that there exist $m$ bisections $\{B_i\}_{i=1}^{m}$ that cover $K$,  
   there exists a non-empty open set $V\subseteq U$ such that $VKV=\emptyset.$
   
   Fix a compact set $K\subseteq G \setminus G^{(0)}$ and let $\{B_i\}_{i=1}^{m+1}$ be a collection of $m+1$ bisections which cover $K$.
   We partition $K$ into two compact components -- defining $$K_{m+1}:=K\setminus \cup^m_{i=1}B_i, \qquad K_m \coloneqq K \setminus B_{m+1}$$ we see that $K = K_{m+1} \cup K_m$ and $K_{m+1}$ is compact and coverable by $B_{m+1}$ alone. Hence, there exists a nonempty open set $U'\subseteq U$ such that $U'K_{m+1}U'=\emptyset.$
   
   Moreover, $K_{m} \coloneqq K\setminus B_{m+1}$ is compact and coverable by the $m$ bisections $\{B_i\}_{i=1}^{m}$. Thus, there exists a non-empty open set $V\subseteq U'\subseteq U$ such that $VK_{m}V=\emptyset$. Since $V\subseteq U'$ and $U'K_{m+1}U'=\emptyset$, it follows that $VKV=\emptyset$.

\end{proof}

\section{Analogue of the Baire Category Theorem}

A topological space has the \emph{Baire} property if any countable union of closed sets with empty interior also has empty interior (see \cite[Chapter~8]{munkresTopology2000}). In this section, working without $\AC$, we establish a connection between Baire spaces and the relationship between the topologically free and topologically principal properties of \'etale groupoids. As corollaries, we extend this connection to various weakenings of the $\AC$, including the $\AComega$ and $\DC$. 

A special case was examined by Brown and Clark in \cite{BrownClark}. Working in the context of a (not necessarily étale) topological groupoid with additional assumptions, a particular formulation of the Baire category theorem was shown to be equivalent to a statement relating being effective and topologically principal. In particular, the formulation of the Baire category theorem for complete metric spaces used is equivalent to $\DC$ \cite{goldblattRoleBaireCategory1985}. The hypotheses of the result in \cite{BrownClark} are somewhat technical and not very natural -- they prioritized injectivity of the range and source maps at the cost of openness.  In what follows,  we sacrifice injectivity in favour of openness. 

\begin{defn}
    Let $G$ be a topological groupoid. We call an open set $B\subseteq G$ an \emph{open iso-section} if 
$r|_{B \setminus G^{(0)}}$ is injective.
\end{defn}

If $U \subseteq G^{(0)}$, then we write $\overline{U}^0$ to denote the closure of $U$ in $G^{(0)}$ with respect to the subspace topology. If $G$ is Hausdorff then $\overline{U}^0 = \overline{U}$, but in general they need not coincide. The following Lemma is an adaptation of \cite[Lemma 4.3]{BrownClark}.

\begin{lemma}\label{range of isosection}
    Let $G$ be a topologically free \etale\ groupoid, and let $B \subseteq G$ be an open iso-section. Then, 
    \begin{enumerate} 
    \item\label{it1:range} $r(\Iso(B)\setminus G^{(0)})$ has empty interior;
    \item\label{it2:range}$\overline{r(\Iso(B)\setminus G^{(0)})}^0$ has empty interior.  
    \end{enumerate}
\end{lemma}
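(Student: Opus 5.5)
The plan is to argue by contradiction in both parts, using the iso-section hypothesis to turn an open set of units into an open set of nontrivial isotropy inside $B$, which topological freeness forbids. Throughout, write $R \coloneqq r(\Iso(B)\setminus G^{(0)})$. For $y \in R$ let $\delta_y$ denote an element of $\Iso(B)\setminus G^{(0)}$ with $r(\delta_y)=y$. Because $r|_{B\setminus G^{(0)}}$ is injective, $\delta_y$ is the \emph{only} element of $B\setminus G^{(0)}$ with range $y$.

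For \eqref{it1:range}, suppose $W \subseteq R$ is nonempty and open in $G^{(0)}$. Fix $x\in W$. Since $G$ is étale, choose an open bisection $C\subseteq B$ with $\delta_x\in C$ and $r(C)\subseteq W$. For $y\in r(C)$ let $c_y$ be the unique element of $C$ over $y$. Then either $c_y\in G^{(0)}$, so $c_y=y$, or $c_y\in B\setminus G^{(0)}$, in which case injectivity forces $c_y=\delta_y\in\Iso(G)$. Hence $C\subseteq\Iso(G)$.

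Now put $D\coloneqq r(C\cap G^{(0)})$, which is open because $G^{(0)}$ is open. Consider $W'\coloneqq r(C)\setminus\overline{D}^0$. If $W'\neq\emptyset$, then $C\cap r^{-1}(W')$ is a nonempty open subset of $\Iso(G)\setminus G^{(0)}$, contradicting topological freeness. So $D$ is dense in $r(C)$, and in particular $\delta_x$ lies in the closure of $G^{(0)}$. The remaining task is to rule out this degenerate situation. For a point $z\in D$ we have two distinct elements $z$ and $\delta_z$ of $B$ over $z$. I would repeat the construction with a bisection $C_z\subseteq B$ around $\delta_z$, and intersect it with $C$ to produce an open set on which both bisections live over the same range points. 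Injectivity of $r$ on non-units of $B$ then forces the non-unit parts to coincide, while their unit parts coincide with the identity. The aim is to extract from this an open piece of $C_z$ missing $G^{(0)}$, again contradicting topological freeness.

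\textbf{This last step is the main obstacle.} In a non-Hausdorff groupoid an element can be a limit of units without being a unit, as in the two-headed snake. I expect it to require either the Hausdorffness of $G^{(0)}$, used as in the proof of Proposition~\ref{prop:claire}, where a net $u_i\to\delta_z$ of units gives $u_i=r(u_i)\to z$, or a more careful choice of $C$ exploiting that $W$ is entirely contained in $R$.

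For \eqref{it2:range}, suppose $O\subseteq\overline{R}^0$ is nonempty and open. Then $O\cap R$ is dense in $O$. I would run the same argument with $W$ replaced by $O$, choosing bisections $C\subseteq B$ around $\delta_y$ for $y\in O\cap R$ with $r(C)\subseteq O$. The density of $O\cap R$ should substitute for the full containment used in \eqref{it1:range}. The goal is to show that $\bigcup_y C\setminus G^{(0)}$ contains a nonempty open subset of $\Iso(G)\setminus G^{(0)}$. Alternatively, one may reduce \eqref{it2:range} to \eqref{it1:range} by showing that $R\cap O$ is itself open in $O$, using that each $\delta_y$ has a bisection neighbourhood whose non-unit part lies over $R$.
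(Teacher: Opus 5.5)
Your proposal does not prove the lemma. You leave open the step you yourself flag as the main obstacle, and part \eqref{it2:range} is only a plan. More importantly, that step cannot be carried out: the situation you call degenerate really occurs, and the lemma is false as stated.

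Take the groupoid $G=\QQ\cup\{\gamma_x : x\in\QQ\}$ of Examples~\ref{example}\eqref{ex3}, which is \etale\ and topologically free. The set $B\coloneqq G$ is open, and $r|_{G\setminus G^{(0)}}\colon\gamma_x\mapsto x$ is injective, so $B$ is an open iso-section. Yet $\Iso(B)\setminus G^{(0)}=\{\gamma_x : x\in\QQ\}$ and $r(\Iso(B)\setminus G^{(0)})=\QQ=G^{(0)}$, so both \eqref{it1:range} and \eqref{it2:range} fail.

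In your notation, $\delta_x=\gamma_x$. Every basic bisection $C=(B'\setminus\{x\})\cup\{\gamma_x\}$ around it lies in $\Iso(G)$, and $C\cap G^{(0)}$ is dense in $C$. This is exactly the configuration you were trying to exclude. Since the conclusion itself fails here, no cleverer choice of $C$ can help. Since $\QQ$ is Hausdorff, neither can Hausdorffness of $G^{(0)}$. Replacing $\QQ$ by $\RR$ gives a locally compact example with Baire unit space. So the direction \eqref{it1:baire}$\Rightarrow$\eqref{it2:baire} of Theorem~\ref{Baire Category theorem}, which rests on this lemma, fails as stated too.

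The paper's proof of \eqref{it1:range} passes over precisely the point you isolated. It asserts $r^{-1}(W)\cap B\subseteq\Iso(B)\setminus G^{(0)}$, which is false whenever $B$ contains units lying over $W$; in the example above, $r^{-1}(W)\cap B\supseteq W$. Your analysis is therefore more accurate than the published argument.

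The lemma becomes true if you add the hypothesis $B\cap G^{(0)}=\emptyset$. Then injectivity gives $r^{-1}(W)\cap B=\{\delta_y : y\in W\}$, a nonempty open subset of $\Iso(G)\setminus G^{(0)}$, which proves \eqref{it1:range}. Moreover, $r|_B$ is then a continuous open injection, hence a homeomorphism onto the open set $r(B)$.

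If in addition $G^{(0)}$ is Hausdorff, then $\Iso(G)$ is closed, so $R$ is closed in $r(B)$. For a nonempty open $O\subseteq\overline{R}^0$, the set $O\cap r(B)$ is open and meets $R$, and it is contained in $\overline{R}^0\cap r(B)=R$. This contradicts \eqref{it1:range}. That relative closedness is the correct mechanism behind your suggested reduction of \eqref{it2:range} to \eqref{it1:range}.

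The mechanism you propose instead does not work in general. You suggest that a bisection neighbourhood of $\delta_y$ has its non-unit part lying over $R$, but nearby non-units of $B$ need not be isotropy.
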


\begin{proof}  
    For item \eqref{it1:range}, we argue by contradiction. Suppose that there exists an open set $W\subseteq r(\Iso(B)\setminus G^{(0)})$. Then 
    \[r^{-1}(W) \cap B \subseteq \Iso(B) \setminus G^{(0)} \subseteq \Iso(G) \setminus G^{(0)}.\] 
    Since $r$ is continuous and $B$ is open, $r^{-1}(W) \cap B$ is open, which contradicts that $G$ is topologically free.

    For item \eqref{it2:range},
    first we claim that $r(\Iso(B) \setminus G^{((0)})$ is closed in $r(B)$ with respect to the subspace topology. 
     Since $G$ is a topological groupoid, $\Iso(G)$ is closed in $G$ and since $G$ is \'etale, $G^{(0)}$ is open in $G$. We have
     \[
     \Iso(B) \setminus G^{(0)} = B \cap (\Iso(G) \setminus G^{(0)}) 
     \] 
and hence  $\Iso(B) \setminus G^{(0)}$ is closed in $B$.  
     Since $r$ is injective on $\Iso(B)\setminus G^{(0)}$,  it is straightforward to check that $r(\Iso(B) \setminus G^{(0)})$ is closed in $r(B)$ as claimed.  Thus  
     \[\overline{r(\Iso(B)\setminus G^{(0)})}^0 \cap r(B) = r(\Iso(B) \setminus G^{(0)}).\]
     
     We now prove \eqref{it2:range} by way of contradiction. 
 Suppose there exists a nonempty open set 
     \[U \subseteq \overline{r(\Iso(B)\setminus G^{(0)})}^0.
     \]
     Then $U \cap r(B)$ is open because $r$ is an open map, and contains $U \cap r(\Iso(B) \setminus G^{(0)})$ so is nonempty.  But,
\[U \cap r(B) \subseteq \overline{r(\Iso(B)\setminus G^{(0)})}^0  \cap r(B)=r(\Iso(B) \setminus G^{(0)}), \] which contradicts item~\eqref{it1:range} and hence no such $U$ exists.  
\end{proof}

\begin{thm}\label{Baire Category theorem}
    Let $X$ be a topological space. The following are equivalent:
    \begin{enumerate}
    \item\label{it1:baire} The topological space $X$ is Baire.
    \item\label{it2:baire} Let $G$ be an \etale\ groupoid with $G^{(0)}=X$, and suppose $G$ admits a countable cover of open iso-sections. If $G$ is topologically free, then $G$ is topologically principal.
    \end{enumerate}
\end{thm}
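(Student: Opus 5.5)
For (1)$\Rightarrow$(2), let $G$ be topologically free with $G^{(0)}=X$, and let $\{B_n\}_{n\in\NN}$ be a countable cover of $G$ by open iso-sections. A unit $x$ has nontrivial isotropy exactly when some $\gamma\in xGx\setminus G^{(0)}$ exists, and such a $\gamma$ lies in some $B_n$. Hence the set of units with nontrivial isotropy is
\[ N=\bigcup_{n\in\NN} r(\Iso(B_n)\setminus G^{(0)}) \subseteq \bigcup_{n\in\NN}\overline{r(\Iso(B_n)\setminus G^{(0)})}^0 . \]
By Lemma~\ref{range of isosection}\eqref{it2:range}, each closed set on the right has empty interior in $X$. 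Since $X$ is Baire, any open subset of $N$ lies in a countable union of closed nowhere dense sets and is therefore empty. A set with empty interior has dense complement, so the units with trivial isotropy are dense and $G$ is topologically principal. Only the given countable cover is used here, so no choice is needed.

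For (2)$\Rightarrow$(1), I argue by contrapositive, modelling the construction on Examples~\ref{example}\eqref{ex3}. Suppose $X$ is not Baire, and fix closed sets $C_n$ ($n\in\NN$) with empty interior whose union contains a nonempty open set $W$. Set $G=X\sqcup\{\gamma_{n,x} : n\in\NN,\ x\in C_n\}$ with $s(\gamma_{n,x})=r(\gamma_{n,x})=x$.

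The obstacle is making the isotropy groups into groups with an étale topology. My first choice is to take at each $x$ the group $\bigoplus_{n: x\in C_n}\ZZ/2$. To keep a countable iso-section cover, I only adjoin the generators as open elements: the $n$-th "sheet" is $B_n=(X\setminus C_n)\cup\{\gamma_{n,x}: x\in C_n\}$, topologised via the bijection $B_n\to X$. Products $\gamma_{n,x}\gamma_{m,x}$ then need their own sheets, indexed by finite subsets $F\subseteq\NN$ with $C_F=\bigcup_{n\in F}C_n$. This keeps the number of sheets countable, and each sheet is an iso-section because $r$ is injective on it.

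The basis consists of open subsets of $X$ together with sets $(U\setminus C_F)\cup\{\gamma_{F,x}:x\in U\cap C_F\}$ with $U$ open. I then need to check three things.
\begin{enumerate}
\item Multiplication and inversion are continuous, which should be routine sheetwise since the sheets multiply as $B_FB_{F'}=B_{F\triangle F'}$.
\item $r$ is a local homeomorphism, which is clear.
\item $G$ is topologically free: every nonempty open set meets $X\setminus C_F$, because $C_F$ is a finite union of closed nowhere dense sets and so has empty interior, with no choice needed.
\end{enumerate}
Finally, $G$ is not topologically principal, since every point of $W$ has nontrivial isotropy. If the group bookkeeping is unpleasant, a fallback is to use sheets indexed by $n$ alone with $\gamma_{n,x}$ of order two. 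Then $x$ gets a $\ZZ/2$ generated by $\gamma_{n(x),x}$ for $n(x)=\min\{n: x\in C_n\}$, and the sheet becomes $B_n=(X\setminus C_n)\cup\{\gamma_{n,x}: x\in C_n\setminus\bigcup_{m<n}C_m\}$. However, continuity of multiplication must then be checked at points of $C_m\cap C_n$, which is the delicate step.
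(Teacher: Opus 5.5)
\textbf{Direction (1)$\Rightarrow$(2): a genuine gap.} Your argument here is the paper's argument, and it breaks at the same point. Lemma~\ref{range of isosection} is false for open iso-sections that contain units. Its proof asserts that an open $W\subseteq r(\Iso(B)\setminus G^{(0)})$ satisfies $r^{-1}(W)\cap B\subseteq \Iso(B)\setminus G^{(0)}$. But the definition of an iso-section allows $B$ to contain the units lying over $W$, and then this inclusion fails.

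In fact (1)$\Rightarrow$(2) is false as stated. Run Examples~\ref{example}\eqref{ex3} with $\RR$ in place of $\QQ$: take $G=\RR\sqcup\{\gamma_x : x\in\RR\}$, where $\gamma_x$ has basic neighbourhoods $(B\setminus\{x\})\cup\{\gamma_x\}$. Exactly as in that example, $G$ is \'etale and topologically free, since every nonempty open set meets $G^{(0)}$. Every unit has nontrivial isotropy. Yet $G$ itself is an open iso-section, because $r(\gamma_x)=x$ is injective. So $\{G\}$ is a countable cover by open iso-sections, while $\overline{r(\Iso(G)\setminus G^{(0)})}^0=\RR$ has nonempty interior. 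Since $\RR$ is Baire and $G$ is not topologically principal, this direction cannot be proved as stated.

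What survives is the following. If the $B_n$ are open bisections, injectivity of $r$ on all of $B_n$ makes the inclusion above correct. If moreover $X$ is Hausdorff, then $\Iso(G)$, the preimage under $(r,s)$ of the diagonal of $X\times X$, is closed; part~\eqref{it2:range} of the lemma uses this, and it is not automatic for a general space $X$. Under these two hypotheses your argument goes through verbatim.

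\textbf{Direction (2)$\Rightarrow$(1): correct, by a different route.} The paper attaches one head $\gamma_x$ to each $x\in\bigcup_n C_n$ with punctured neighbourhoods. It then covers $G$ by $X$ and the iso-sections $\bigcup_{x\in C_n}X_x$, which contain units. You instead use sheets $B_F=\{\gamma_{F_x,x}: x\in X\}$, where $F_x=\{n\in F: x\in C_n\}$ and $\gamma_{\emptyset,x}=x$. State this convention explicitly, since $\gamma_{F,x}$ only makes sense as the product over those $n\in F$ with $x\in C_n$.

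Write $\sigma_F\colon X\to B_F$ for the obvious bijection. The key identity is $\sigma_F(U)\cap\sigma_{F'}(U')=\sigma_F\bigl((U\cap U')\setminus C_{F\triangle F'}\bigr)$, with $C_{F\triangle F'}$ closed. From it you get:
\begin{enumerate}
\item the sets $\sigma_F(U)$ form a basis, and each $B_F$ is an open bisection;
\item multiplication is locally $(\sigma_F(y),\sigma_{F'}(y))\mapsto\sigma_{F\triangle F'}(y)$, hence continuous;
\item $G$ is topologically free, because a finite union of closed sets with empty interior has empty interior, and this needs no choice.
\end{enumerate}

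Your construction buys two things over the paper's. First, your cover consists of open bisections, so it proves the converse of the repaired statement above. The paper's groupoid usually has no countable bisection cover, because the heads in any open bisection form a discrete subset of $X$. Second, you never need $B\setminus\{x\}$ to be open, whereas the paper's basis tacitly requires the points of $\bigcup_n C_n$ to be closed. You can drop the fallback.
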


\begin{proof}
     For $\eqref{it1:baire}\Rightarrow \eqref{it2:baire}$, the argument is similar to that of \cite[Proposition~2.24]{KM2021} and \cite[Proposition~3.6]{Renault2008}.  Suppose that $X$ is Baire and let $G$ be a topologically free, \etale\ groupoid with $G^{(0)}=X$. Let $\{B_n\}$ be a countable cover of $G$ by open iso-sections. We show that the set of units of $G$ with trivial isotropy is dense in $G^{0}=X$. 
     
     For each $n \in \NN$ let 
     \[C_n:=\overline{r(\Iso(B_n)\setminus G^{(0)})}^0 \subseteq G^{(0)}.\] 
     Then each $C_n$ has empty interior by Lemma~\ref{range of isosection}~\eqref{it2:range}.  Since $X$ is Baire, the set 
     $C\coloneqq \bigcup_{n \in \NN}C_n$
     also has empty interior.  Thus every open set in $G^{(0)}$ has nontrivial intersection with the complement of $C$.   Since $\{B_n\}$ is a cover, all units with nontrivial istropy are contained in $C$ and hence each of the units in the complement of $C$ have trivial isotropy. Thus, the units with trivial isotropy are dense.

     To show \eqref{it2:baire} implies \eqref{it1:baire},  fix a topological space $X$ with a basis $\mathcal{B}$, and a countable collection of closed sets $\{C_n\}$ each with empty interior.  We construct a groupoid $G$ as in \cite[Example~3.2]{BrownClark} that is a generalised two-headed snake where we attach a head for each element of $C_n$.  In particular, we set $G^{(0)} \coloneqq X$ and for each $x \in C_n$ we adjoin a groupoid element $\gamma_x$ such that $s(\gamma_x)=r(\gamma_x)$ and $\gamma_x\gamma_x=x$.  

%
%
%
%

     To define the topology on $G$, for each $x \in X$ and each $B\in \mathcal{B}$ with $x \in B$, we define $B_x=(B -\{x\}) \cup \{\gamma_x\}$, giving us a basis 
     \[
     \mathcal{B} \cup \left( \bigcup_{x\in X}\bigcup_{\{B \in \mathcal{B} : x\in B\}} \{B_x\}\right)
     \]
     under which $G$ is a topological groupoid. Since each basic open set is a bisection, $G$ is \'etale. Without loss of generality, we assume that $X\in \mathcal{B}$ (if not, we may add it without altering the topology on $X$). For each $n \in \NN$ define \[B_n = \bigcup_{x\in C_n} X_x.\] It is clear that $r|_{B_n\setminus G^{(0)}}$ is injective, and as a union of basis elements it is open, hence $B_n$ is an open iso-section. Thus, the collection $\{B_n\} \cup \{X\}$ is a countable cover of $G$ by open iso-sections.  
     
     Notice that every open set in $G$ has nontrivial intersection with $G^{(0)}$, thus $G$ is topologically free and hence \eqref{it2:baire} gives us that $G$ is also topologically principal.  
     That is, the set of units with trivial isotropy is dense in $G^{(0)}$, which in this case, says the complement of $C\coloneqq\cup_n C_n$ is dense.  Therefore $C$ has empty interior and hence $X$ is Baire. 
\end{proof}

\begin{rmk}
    Since an open bisection is automatically an open iso-section,  any second-countable \'etale groupoid has a countable cover of open iso-sections.  However, if an \'etale groupoid has a countable cover of open iso-sections, this does not imply the groupoid is second-countable.  For example, construct a groupoid as above such that $C_n$ is uncountable for some $n \in \NN$. Then, this groupoid has a countable cover of open iso-sections but is not second-countable.  
\end{rmk}

\subsection{Baire Category Theorems}
Since no form of choice is required in the proof of Theorem~\ref{Baire Category theorem}, we establish new equivalences with various formulations of the Axiom of Choice. In each Corollary, Theorem~\ref{Baire Category theorem} gives the equivalence of items \eqref{it1:1} and \eqref{it3:1}.
\begin{cor}
\label{cor1}
The following are equivalent:
\begin{enumerate}
	\item\label{it1:1} every second-countable complete pseudometric space is Baire,
	\item\label{it2:1} $\AComega$, 
	\item\label{it3:1} let $G$ be an \etale\ groupoid that has a countable cover of open iso-sections, such that $G^{(0)}$ is a \underline{second-countable complete pseudometric space}. If $G$ is topologically free, it is topologically principal. 
\end{enumerate}
\end{cor}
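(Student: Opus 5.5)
The plan is to get the equivalence of \eqref{it1:1} and \eqref{it2:1} from the choice literature, and the equivalence of \eqref{it1:1} and \eqref{it3:1} from Theorem~\ref{Baire Category theorem}. The latter comes for free, as the paper notes. Applying Theorem~\ref{Baire Category theorem} to a fixed space $X$ shows that $X$ is Baire if and only if statement~\eqref{it2:baire} holds for every \etale\ groupoid $G$ with $G^{(0)}=X$. We then quantify over all second-countable complete pseudometric spaces $X$. Statement \eqref{it1:1} is exactly ``every such $X$ is Baire''. Statement \eqref{it3:1} is exactly ``\eqref{it2:baire} holds for every groupoid whose unit space is such an $X$''.

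The one point to check is that nothing in the proof of Theorem~\ref{Baire Category theorem} uses choice. In particular, the groupoid constructed in \eqref{it2:baire}$\Rightarrow$\eqref{it1:baire} should have unit space exactly $X$, so the class of unit spaces is preserved. That groupoid is built explicitly from $X$, a basis $\mathcal{B}$ (take all open sets, which needs no choice) and the given sequence $\{C_n\}$. The cover $\{B_n\}\cup\{X\}$ is likewise defined explicitly, with no selections. So the argument goes through in ZF.

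For \eqref{it1:1}$\iff$\eqref{it2:1} I would cite the known result that $\AComega$ is equivalent to ``every second-countable complete pseudometric space is Baire''. This is in \cite{herrlich2000}, going back to work of Goldblatt and of Brunner. To give an outline instead: for $\AComega\Rightarrow$\eqref{it1:1}, the usual Baire proof runs through the countable basis. Let $\{D_n\}$ be the dense open complements. Fix an enumeration of basic balls and at each stage take the least-indexed basic ball whose closure lies in the previous ball intersected with $D_n$, with radius below $1/n$. This removes the dependent choices entirely. Countable choice is still needed to pick a countable dense set or witness points, since the basis is given only abstractly. Completeness then provides a point in the intersection of the nested closed balls, inside the prescribed open set.

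For \eqref{it1:1}$\Rightarrow\AComega$, which I expect to be the main obstacle, I would use the standard counterexample. Given a countable family $\{A_n\}$ of nonempty sets with no choice function, use the pseudometric trick: give each $A_n$ the indiscrete pseudometric (distance $0$). Form the space $X=\prod_n A_n$ modulo pseudometric, or more precisely the tree of finite partial choice sequences, with the Baire-space-type pseudometric $d(f,g)=2^{-\min\{n: f(n)\neq g(n)\}}$ suitably modified. One shows that $X$ is second-countable and complete but fails to be Baire, because the sets $F_n$ of sequences of length at most $n$ are closed and nowhere dense. Getting the details right is delicate, so I would defer to the citation.
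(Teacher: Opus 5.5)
Your decomposition is the paper's. You get \eqref{it1:1}$\iff$\eqref{it3:1} by applying Theorem~\ref{Baire Category theorem} to each space in the class and quantifying, and \eqref{it1:1}$\iff$\eqref{it2:1} is quoted from the literature; the paper cites Bentley--Herrlich, \cite[Theorem~3.4]{BH}.

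\textbf{The unit space is not $X$ for non-$T_1$ pseudometric spaces.} You single out the requirement that the constructed groupoid has unit space exactly $X$, but this is not only about avoiding choice, and for pseudometric spaces it fails. Suppose $x\in C_n$ and some $y\neq x$ has $d(x,y)=0$. Then $\{x\}$ is not closed in $X$, yet $X_x\cap X=X\setminus\{x\}$ is open in $G$. So $G^{(0)}$ gets a strictly finer topology than $X$. On the class $\{y: d(x,y)=0\}\subseteq C_n$ it induces the cofinite topology, so in general it is not pseudometrizable, and \eqref{it3:1} cannot be invoked. These non-$T_1$ spaces are exactly the ones that matter here: the standard witness for the failure of $\AComega$ (described below) is of this kind.

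The gap is inherited from the paper's one-line appeal to Theorem~\ref{Baire Category theorem}; for metric or Hausdorff $X$ it does not arise. It is repaired by attaching heads over whole classes. For $x\in C$ and $x\in B\in\mathcal{B}$, use
\[
(B\setminus\overline{\{x\}})\cup\{\gamma_y : y\in\overline{\{x\}}\}.
\]
The set $\overline{\{x\}}=\{y : d(x,y)=0\}$ is closed, lies in $C_n$ (so it has empty interior), and is contained in every open set containing $x$. Hence these sets together with $\mathcal{B}$ form a basis of open bisections whose trace on $X$ is the original topology. The rest of the argument (topological freeness, the iso-sections, density of $X\setminus C$) then goes through unchanged and without choice.

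\textbf{Your outline of \eqref{it1:1}$\Rightarrow$\eqref{it2:1} is false as written.} Once every $A_n$ has diameter $0$, a Baire-space-type pseudometric on finite partial choice sequences sees only their lengths. Each level is then an open ball, the $F_n$ are open rather than nowhere dense, and the space (up to its metric quotient a countable discrete space) is Baire.

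A correct witness is modelled on $\QQ$. Replace $A_n$ by $\prod_{i\le n}A_i$, so that no infinite subfamily has a choice function. Enumerate $\QQ=\{q_n\}$ and set $X=\bigcup_n\{q_n\}\times A_n$ with $d((q_n,a),(q_m,b))=|q_n-q_m|$.
\begin{enumerate}
\item $X$ is second-countable, since $d$ is pulled back from $\QQ$.
\item $X$ is complete. If the projection of a Cauchy sequence converges to $q_m$, the sequence converges to every point of the fibre over $q_m$. Otherwise the sequence meets infinitely many fibres and yields a choice function on infinitely many $A_n$, which is impossible.
\item $X$ is not Baire: the fibres are closed, nowhere dense, and cover $X$.
\end{enumerate}
Since you defer to the citation, this does not break your proof, but the sketch should not be presented as it stands.
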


\begin{proof}The equivalence of \eqref{it1:1} and  \eqref{it2:1} is shown in \cite[Theorem~3.4]{BH}. 
\end{proof}

\begin{cor}
\label{cor2}
The following are equivalent:
\begin{enumerate}
	\item\label{it1:2} every complete metric space is Baire,
	\item\label{it2:2} $\DC$,
	\item\label{it3:2} let $G$ be an \etale\ groupoid with a countable cover of open iso-sections, such that $G^{(0)}$ is a \underline{complete metric space}. If $G$ is topologically free, it is topologically principal. 
\end{enumerate}
\end{cor}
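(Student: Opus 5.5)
The plan is to split Corollary~\ref{cor2} into two equivalences, exactly as the paper set up for these corollaries. The equivalence of \eqref{it1:2} and \eqref{it2:2} is a known result from the foundations literature: in ZF, the statement ``every complete metric space is Baire'' is equivalent to $\DC$. This is Blair's observation together with Goldblatt's converse \cite{goldblattRoleBaireCategory1985}, also recorded in \cite{herrlich2000}. I will simply cite it. The forward direction is the usual nested-balls proof, which chooses a sequence of balls dependently. The reverse direction builds, from a total relation $R$ on $X$, the complete metric space of $R$-chains (the Baire-space-style metric on $X^{\NN}$ restricted to sequences respecting $R$), and reads off a chain from a point avoiding countably many nowhere dense sets.

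The equivalence of \eqref{it1:2} and \eqref{it3:2} comes from Theorem~\ref{Baire Category theorem}, applied pointwise to each complete metric space $X$. For \eqref{it1:2}$\Rightarrow$\eqref{it3:2}, take $G$ as in \eqref{it3:2} and set $X = G^{(0)}$. This $X$ is a complete metric space, so it is Baire by \eqref{it1:2}. Then the implication \eqref{it1:baire}$\Rightarrow$\eqref{it2:baire} of Theorem~\ref{Baire Category theorem} shows that $G$ is topologically principal. For \eqref{it3:2}$\Rightarrow$\eqref{it1:2}, fix a complete metric space $X$. Any étale groupoid with unit space $X$ and a countable cover of open iso-sections falls under the hypothesis of \eqref{it3:2}, so statement \eqref{it2:baire} of Theorem~\ref{Baire Category theorem} holds for $X$. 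Hence $X$ is Baire.

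The main thing to check is that the proof of Theorem~\ref{Baire Category theorem} uses no choice at all. This matters because Corollary~\ref{cor2} is a ZF statement, and because in the backward direction we only know \eqref{it2:baire} for groupoids whose unit space is the given complete metric space. So the generalised snake groupoid built in that proof must keep $G^{(0)} = X$ exactly, with the same topology; a new groupoid in which only the heads change the global topology is fine. Reviewing the construction confirms this. The heads $\gamma_x$ are indexed by $x \in \bigcup_n C_n$, the basis is defined explicitly, and the cover $\{B_n\}\cup\{X\}$ is explicit. The subspace topology on $X$ is the original one, since the sets in $\mathcal{B}$ remain open and any basic set $B_x$ meets $X$ in the open set $B\setminus\{x\}$. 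No choices are made. In the forward direction, Lemma~\ref{range of isosection} is likewise choice-free, and the countable cover $\{B_n\}$ is given as part of the data, so no countable choice is needed to pick it.
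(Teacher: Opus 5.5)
Your proposal is correct and follows the paper's proof: items (1) and (2) are equivalent by citation to the set-theory literature, and items (1) and (3) are equivalent by Theorem~\ref{Baire Category theorem} applied to the fixed space $X=G^{(0)}$, which is valid in ZF because that theorem's proof uses no choice. One caution about your aside, which the proof does not depend on: the space of infinite $R$-chains may be empty when $\DC$ fails, so that sketch is circular. The standard argument instead works in the full, nonempty space $X^{\NN}$, takes dense open sets asserting that each coordinate has an $R$-successor among the coordinates, and then extracts a chain by least-index recursion.
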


\begin{proof}The equivalence of \eqref{it1:2} and  \eqref{it2:2} is shown in \cite{Blair}.
\end{proof}

\begin{cor}
\label{cor3}
The following are equivalent:
\begin{enumerate}
	\item\label{it1:3} every compact Hausdorff space is Baire,
	\item\label{it2:3} $\DMC$,
	\item\label{it3:3} let $G$ be an \etale\ groupoid with a countable cover of open iso-sections, such that $G^{(0)}$ is a \underline{compact Hausdorff space}. If $G$ is topologically free, it is topologically principal. 
\end{enumerate}
\end{cor}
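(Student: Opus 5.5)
The plan follows the pattern of Corollaries~\ref{cor1} and~\ref{cor2}. Theorem~\ref{Baire Category theorem} is stated for an arbitrary topological space $X$, so it gives the equivalence of \eqref{it1:3} and \eqref{it3:3}. Suppose every compact Hausdorff space is Baire. Then for any groupoid $G$ as in \eqref{it3:3}, the space $X = G^{(0)}$ is Baire, and the implication \eqref{it1:baire}$\Rightarrow$\eqref{it2:baire} yields \eqref{it3:3}. Conversely, assume \eqref{it3:3} and fix a compact Hausdorff space $X$ together with a countable family $\{C_n\}$ of closed sets with empty interior.

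For this converse we rerun the construction in the proof of \eqref{it2:baire}$\Rightarrow$\eqref{it1:baire}. That construction builds an \etale\ groupoid $G$ with $G^{(0)} = X$, a countable cover $\{B_n\}\cup\{X\}$ by open iso-sections, and the property of being topologically free. The one point to check is that $G^{(0)}$ is exactly the original space $X$ with its original topology. This holds because the new basic sets $B_x$ meet $X$ in $B\setminus\{x\}$, which is open, so the subspace topology on $X$ is unchanged. Hence $G^{(0)}$ is compact Hausdorff, \eqref{it3:3} applies, and the argument concludes as before that $\bigcup_n C_n$ has empty interior. No choice is used anywhere in this step, which matters because we are working in ZF.

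What remains is the equivalence of \eqref{it1:3} and \eqref{it2:3}, namely that the Baire category theorem for compact Hausdorff spaces is equivalent to $\DMC$ over ZF. I would not reprove this but cite the known result, due to Fossy and Morillon (the reference \cite{Fossy} already used for the diagram of choice principles), in the same way Corollaries~\ref{cor1} and~\ref{cor2} cite \cite{BH} and \cite{Blair}.

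The main obstacle is making sure the cited equivalence is stated for exactly the formulation in \eqref{it1:3}, namely that every compact Hausdorff space is Baire in the paper's sense, where countable unions of closed nowhere dense sets have empty interior. Some sources use the dual formulation with countable intersections of dense open sets, but that is equivalent by taking complements, and taking complements needs no choice. If the reference only states the result for a subclass of compact Hausdorff spaces, I would fall back on checking its proof directly.
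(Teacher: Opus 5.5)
Your proposal is correct and matches the paper's argument. Theorem~\ref{Baire Category theorem}, applied to each compact Hausdorff space, gives \eqref{it1:3}$\iff$\eqref{it3:3}, and \eqref{it1:3}$\iff$\eqref{it2:3} is exactly the cited Fossy--Morillon result \cite[Corollary~3]{Fossy}. Your extra check that the construction leaves the topology on $X=G^{(0)}$ unchanged is already implicit in the theorem; it relies on points of $X$ being closed, so that $B\setminus\{x\}$ is open, which holds here because $X$ is Hausdorff.
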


\begin{proof}The equivalence of \eqref{it1:3} and  \eqref{it2:3} is shown in \cite[Corollary~3]{Fossy}.
\end{proof}

\begin{rmk}
    In general, having a countable cover of open iso-sections is a weaker condition than having a countable cover of open bisections. However, if $G^{(0)}$ is closed in $G$, then the two conditions are equivalent. In particular, if $\{B_i\}_{i \in \NN}$ is a countable cover of $G$ by iso-sections, we can set $D_i \coloneqq B_i \setminus G^{(0)}$ for each $i \in \NN$. Then, each $D_i$ is an open bisection, and so $\{D_i\}_{i \in \NN} \cup \{X\}$ is a countable cover of $G$ by open bisections. 
\end{rmk}

\bibliographystyle{plain}
\bibliography{library}
\end{document}